\newtheorem{thm}{Theorem}
\newtheorem{lem}{Lemma}
\newtheorem{cor}{Corollary}
\title{Lehmer numbers and  primitive roots modulo a prime}
\author{Stephen D. Cohen \footnote {Postal address: 6 Bracken Road, Portlethen, Aberdeen AB12 4TA, Scotland.}\\
{\small University of Glasgow, Scotland}\\
  {\small Stephen.Cohen@glasgow.ac.uk}
\and
  Tim Trudgian\footnote{Supported by Australian Research Council Future Fellowship FT160100094.}\\
{\small School of Physical, Environmental and Mathematical Sciences}\\{\small  The University of New South Wales Canberra, Australia }\\
{\small  t.trudgian@adfa.edu.au } }
\begin{document}
\maketitle
\begin{abstract}
\noindent
A Lehmer number 
modulo a prime $p$  is an integer $a$ with $1 \leq a \leq p-1$ whose inverse $\bar{a}$ within
 the same range has opposite parity. Lehmer numbers that are also primitive roots have been discussed by Wang and Wang  \cite{WW} in an endeavour to count the number
 of ways  $1$ can be expressed as  the sum of two primitive roots that are also Lehmer numbers
 (an extension of a question of S.\ Golomb).  
 In this paper we give an explicit estimate for the number of Lehmer  primitive roots modulo $p$ and prove that, for all primes $p \neq 2,3,7$, Lehmer primitive roots exist.  We also make
  explicit the  known  expression for the number of Lehmer numbers modulo $p$ and improve the Wang--Wang estimate
   for the number of solutions to the Golomb--Lehmer primitive root problem.
  \end{abstract}

\section{Introduction}

Let $p$ be an odd prime and $a$ an integer with $1 \leq a \leq p-1$.   Define $\bar{a} $ to be integer with $1 \leq \bar{a} \leq p-1$
such that $\bar{a}$ is the inverse of $a$ modulo $p$.  Following the interest in such integers by D. H. Lehmer (see, e.g.\ \cite[\S F12]{Guy}) we define $a$ to be a {\em Lehmer number}
if $a$ and $\bar{a}$ have opposite parity, i.e., $a+\bar{a}$ is odd.  Thus $a$ is a Lehmer number if and only if $\bar{a}$ is a Lehmer number.
 It is easily checked that there are no Lehmer numbers modulo $p$ when $p=3$  or $7$.

W. Zhang  \cite{zhang94}  has shown
that $M_p$, the number of Lehmer numbers modulo $p$, satisfies
\begin{equation}\label{zhang}
M_p =\frac{p-1}{2} + O(p^{\frac{1}{2}} \log^2 p).
\end{equation}
We make this explicit in Theorem \ref{straw} below.
%In fact we show (Theorem \ref{straw} below) that for all primes $p$,
% \begin{equation} \label{zhang1}
 % \left|M_p -\frac{p-1}{2}\right| < \frac{1}{2}p^{\frac{1}{2}} \log^2 p.
%\end{equation}

 A Lehmer number which is also a primitive root modulo $p$ will be called a {\em Lehmer primitive root} or an {\em LPR}.  The inverse $\bar{a}$ of an LPR is also an LPR.
  Since there is no Lehmer number  modulo $3$,  we can suppose $p>3$.
  %, in which case\footnote{\textbf{From TST: Do we actually use below the fact that $2\leq a \leq p-2$?}} a LPR satisfies $2 \leq a \leq p-2$.
 Wang and Wang  \cite{WW} consider LPRs in an analogue of the question
of Golomb relating to pairs $(a,b)$ of primitive roots modulo $p$ for which $a+b \equiv 1 \pmod p$.  Specifically, Wang and Wang
derive  an asymptotic estimate for $G_p$, the  number of pairs $(a,b)$ of LPRs  for which  $a+b \equiv 1 \pmod p$ (thus $a+b=p+1$), namely,
\begin{equation} \label{wangwang}
G_p = \theta_{p-1}^2\left(\frac{p-1}{4} +O(W_{p-1}^2 p^{\frac{3}{4}}\log^2 p)\right),
\end{equation}
where, for a positive integer $m$, $\theta_m = \frac{\phi(m)}{m}$ ($\phi$ being Euler's function)
and $W_m = 2^{\omega(m)}$ is the number of square-free divisors of $m$.  It follows from  (\ref{wangwang}) that
there is always a pair $(a,b)$ of LPRs modulo $p$ for which $a+b=p+1$ for sufficiently large $p$. Since the result is inexplicit it is an open problem to specify which primes $p$ (if any) fail to possess such a pair $(a,b)$. 

As a preliminary it is clearly desirable to possess an asymptotic expression analogous to (\ref{zhang}) and (\ref{wangwang})
for $N_p$ defined simply as the number of LPRs modulo a prime $p \ (>3)$ and also to exhibit explicitly the finite list of primes $p$ for which there exists no LPR modulo $p$.
This is the main purpose of the present article.

For odd integers $m\geq3$ define the positive number $T_m$ by
\begin{equation} \label{Tm}
T_m=\frac{2\sum_{j=1}^{(m-1)/2}\tan\left(\frac{\pi j}{m}\right)}{m\log m}.
\end{equation}
The asymptotic result to be proved is the following.

\begin{thm} \label{Np}
Let $p>3$ be a prime.  Then
\begin{equation}\label{Npeq}
\left|N_{p}- \frac{\phi(p-1)}{2}\right| < T_p^2\theta_{p-1} W_{p-1}p^{\frac{1}{2}}\log^2 p .
\end{equation}
In particular, if $p >3$, then
\begin{equation}\label{Npeq1}
\left|N_{p}- \frac{\phi(p-1)}{2}\right| <\frac{1}{2}\theta_{p-1}W_{p-1}p^{\frac{1}{2}} \log^2 p.
\end{equation}
\end{thm}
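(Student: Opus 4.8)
The plan is to reduce everything to complete character sums of Kloosterman type. Write $e_p(x)=e^{2\pi i x/p}$. The engine of the argument will be the exact identity
\[
(-1)^a=\frac{i}{p}\sum_{s=1}^{p-1}\tan\!\Bigl(\frac{\pi s}{p}\Bigr)e_p(sa)\qquad(1\le a\le p-1),
\]
which I would establish by Fourier analysis on $\ZZ/p\ZZ$: for the function $g$ with $g(0)=0$ and $g(a)=(-1)^a$ on $1\le a\le p-1$ one has $\widehat g(0)=\tfrac1p\sum_{a=1}^{p-1}(-1)^a=0$ (as $p$ is odd), while for $s\not\equiv0$ the coefficient $\widehat g(s)=\tfrac1p\sum_{a=1}^{p-1}(-1)^ae_p(-sa)$ is $\tfrac1p$ times a geometric sum in $z=-e_p(-s)$ (with $z^p=-1$), namely $\tfrac1p\cdot\tfrac{1-e_p(-s)}{1+e_p(-s)}=\tfrac{i}{p}\tan(\tfrac{\pi s}{p})$; Fourier inversion then yields the displayed identity on $[1,p-1]$. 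The virtue of this precise shape is that the $\ell^1$-norm of its coefficients is, by antisymmetry $\tan(\pi(p-j)/p)=-\tan(\pi j/p)$ and the definition \eqref{Tm},
\[
\sum_{s=1}^{p-1}\Bigl|\tfrac{i}{p}\tan\!\bigl(\tfrac{\pi s}{p}\bigr)\Bigr|=\frac{2}{p}\sum_{j=1}^{(p-1)/2}\tan\!\Bigl(\frac{\pi j}{p}\Bigr)=T_p\log p .
\]

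Next I would assemble $N_p$. From $\mathbf 1[a+\bar a\text{ odd}]=\tfrac12-\tfrac12(-1)^a(-1)^{\bar a}$, the standard indicator $\mathbf 1[a\text{ is a primitive root}]=\theta_{p-1}\sum_{d\mid p-1}\frac{\mu(d)}{\phi(d)}\sum_{\mathrm{ord}(\chi)=d}\chi(a)$, and the tangent identity applied to $a$ and to $\bar a$ (both of which lie in $[1,p-1]$), multiplying out gives
\[
N_p=\frac{\phi(p-1)}{2}-\frac{\theta_{p-1}}{2}\sum_{s,t=1}^{p-1}\widehat g(s)\,\widehat g(t)\sum_{d\mid p-1}\frac{\mu(d)}{\phi(d)}\sum_{\mathrm{ord}(\chi)=d}W_\chi(s,t),\qquad W_\chi(s,t):=\sum_{a=1}^{p-1}\chi(a)e_p(sa+t\bar a).
\]
Here the main term is exactly $\phi(p-1)/2$: the constant Fourier mode of $(-1)^a$ vanishes, so nothing escapes the double sum; in particular the $d=1$ part of the remainder is a Kloosterman sum $K(s,t)=\sum_a e_p(sa+t\bar a)$ with $s,t\not\equiv0$, not a secondary main term.

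The one substantial input is Weil's bound $|W_\chi(s,t)|\le 2\sqrt p$, valid for every character $\chi$ modulo $p$ and all $(s,t)\not\equiv(0,0)$ — for $\chi=\chi_0$ the classical Kloosterman estimate, and for nontrivial $\chi$ a twisted Kloosterman/Salié sum — and it applies throughout since $s,t\in\{1,\dots,p-1\}$. Inserting it, together with $\bigl(\sum_s|\widehat g(s)|\bigr)^2=T_p^2\log^2p$, $\#\{\chi:\mathrm{ord}(\chi)=d\}=\phi(d)$ and $\sum_{d\mid p-1}|\mu(d)|=W_{p-1}$, yields $|N_p-\tfrac{\phi(p-1)}{2}|\le \theta_{p-1}W_{p-1}\sqrt p\,T_p^2\log^2p$, i.e.\ \eqref{Npeq}; the inequality is strict because the $W_\chi(s,t)$ cannot simultaneously attain $2\sqrt p$ (for instance $\sum_t K(s,t)=1$, and the $\chi_0$-terms enter with nonzero weight). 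For \eqref{Npeq1} I would show $T_p^2\le\tfrac12$ once $p$ is large: the substitution $\tan(\pi j/p)=\cot\bigl(\tfrac{(p-2j)\pi}{2p}\bigr)$ recasts the numerator of \eqref{Tm} as $\sum_{k=0}^{(p-3)/2}\cot\bigl(\tfrac{(2k+1)\pi}{2p}\bigr)$, and $\cot x<1/x$ on $(0,\pi/2)$ then gives $T_p<\tfrac2\pi+O(1/\log p)$, which is below $1/\sqrt2$ beyond an explicit bound; the finitely many remaining primes are handled by computing $N_p$ and checking \eqref{Npeq1} directly.

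The step that carries the real weight is the Weil estimate for $W_\chi(s,t)$ with $\chi$ nontrivial (resting on the Riemann hypothesis for curves over finite fields); everything else is Fourier analysis on $\ZZ/p\ZZ$ together with elementary trigonometric and arithmetic bounds, the only genuine chore being to make the threshold in the last step explicit and to dispatch the small primes by computation.
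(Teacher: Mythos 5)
Your proposal is correct and follows essentially the same route as the paper: expand $(-1)^a$ in additive characters with Fourier coefficients $\tfrac{i}{p}\tan(\pi s/p)$ (whose $\ell^1$-norm is $T_p\log p$), combine with the character-sum indicator for $(p-1)$-free elements, bound the twisted Kloosterman sums $\sum_a\chi(a)\psi(sa+t\bar a)$ by $2p^{1/2}$, and then deduce \eqref{Npeq1} from $T_p^2<\tfrac12$ for large $p$ plus a finite computation. The only cosmetic differences are that you state the Fourier identity in closed form rather than via the paper's double-sum transformation, and you bound $T_p$ by a cotangent comparison rather than the paper's Riemann-sum argument.
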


A criterion for the existence of an LPR follows immediately from Theorem \ref{Np}.
\begin{cor} \label{exist}
Let $p>3$ be a prime.   Suppose that
\[p^{\frac{1}{2}}>2T_p^2W_{p-1} \log^2 p + p^{-\frac{1}{2}}.\]  Then there exists an LPR modulo $p$.  In particular, provided $p > 7$,  it suffices that
\begin{equation}\label{dundee}
p^{\frac{1}{2}}> W_{p-1}\log^2 p + p^{-\frac{1}{2}}.
\end{equation}
\end{cor}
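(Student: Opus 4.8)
The plan is to read a lower bound for $N_p$ directly off Theorem~\ref{Np} and then convert $N_p>0$ into a condition on $p$ by elementary algebra, using that $N_p$ is a non-negative integer, so that $N_p>0$ already yields $N_p\ge 1$, i.e.\ the existence of an LPR modulo $p$.

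First I would invoke \eqref{Npeq}: it says that $N_p$ exceeds $\tfrac{\phi(p-1)}{2}-T_p^2\theta_{p-1}W_{p-1}p^{1/2}\log^2 p$. Writing $\phi(p-1)=(p-1)\theta_{p-1}$ and factoring out the positive quantity $\theta_{p-1}$, this lower bound equals $\theta_{p-1}\bigl(\tfrac{p-1}{2}-T_p^2W_{p-1}p^{1/2}\log^2 p\bigr)$, which is non-negative precisely when $\tfrac{p-1}{2}\ge T_p^2W_{p-1}p^{1/2}\log^2 p$. Dividing by $p^{1/2}/2$, that inequality reads $p^{1/2}-p^{-1/2}\ge 2T_p^2W_{p-1}\log^2 p$, i.e.\ $p^{1/2}\ge 2T_p^2W_{p-1}\log^2 p+p^{-1/2}$. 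Since the hypothesis of the Corollary is the strict version of this, it forces $N_p>0$, proving the first assertion.

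For the cleaner sufficient condition \eqref{dundee} I would not try to bound $T_p$ and substitute into the first assertion — $2T_p^2\le 1$ in fact fails for small primes — but instead rerun the same computation with \eqref{Npeq1} in place of \eqref{Npeq}, which amounts to replacing $T_p^2$ by $\tfrac12$ throughout. The positivity condition for $N_p$ then becomes $p-1\ge W_{p-1}p^{1/2}\log^2 p$, equivalently $p^{1/2}\ge W_{p-1}\log^2 p+p^{-1/2}$, and \eqref{dundee} is its strict form. The hypothesis $p>7$ is not actually needed for the deduction; it is stated only because modulo $7$ there is no Lehmer number (hence no LPR), while \eqref{dundee} is anyway not met by $p=5$.

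I do not expect a real obstacle: all the arithmetic force is already in Theorem~\ref{Np}. The only points meriting care are the integrality step $N_p>0\Rightarrow N_p\ge1$, the use of $\theta_{p-1}>0$ when cancelling it, and the choice to go back to \eqref{Npeq1} rather than \eqref{Npeq} for the bound \eqref{dundee}.
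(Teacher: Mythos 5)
Your proposal is correct and is exactly the deduction the paper intends: the corollary is stated as following "immediately" from Theorem \ref{Np}, and the intended route is precisely your rearrangement of \eqref{Npeq} (after factoring out $\theta_{p-1}$) for the first claim and of \eqref{Npeq1} for \eqref{dundee}. Your observations that one must use \eqref{Npeq1} rather than the bound $T_p^2<\tfrac12$ for small $p$, and that the hypothesis $p>7$ is not logically needed for \eqref{dundee}, are both accurate.
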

In fact, a complete  existence result will be proved as follows.
\begin{thm}\label{lpr}
Suppose $p (\neq3, 7)$ is an odd prime.  Then there exists an {\em LPR} modulo $p$.
\end{thm}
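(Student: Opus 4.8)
The plan is to combine the explicit estimate of Theorem~\ref{Np} with a direct finite check. First I would observe that, by Corollary~\ref{exist}, it suffices to verify the inequality \eqref{dundee}, namely $p^{1/2} > W_{p-1}\log^2 p + p^{-1/2}$, for all sufficiently large primes $p$, and then handle the remaining small primes by explicit computation. So the argument splits into an asymptotic range and a finite range, with the crossover point determined by how fast $W_{p-1} = 2^{\omega(p-1)}$ can grow.

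The key analytic step is to bound $W_{p-1}$ from above. Since $\omega(n)$ is maximised, for $n$ below a given size, by taking $n$ to be a product of the first few primes (a primorial), one has a bound of the shape $W_{n} \ll n^{\epsilon}$, and more usefully an explicit bound such as $W_{p-1} \le c\, (p-1)^{1/(2\log\log(p-1))}$ or a cruder fully explicit inequality of the form $W_{p-1} < C p^{\delta}$ valid for $p$ beyond some threshold. Feeding any such bound into \eqref{dundee} shows that $p^{1/2}$ eventually dominates $W_{p-1}\log^2 p + p^{-1/2}$, so \eqref{dundee} holds for all $p \ge p_0$ for some explicit, moderately sized $p_0$. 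I expect $p_0$ to be in the range of a few thousand to perhaps $10^5$ or so, depending on how carefully the $W_{p-1}$ bound is optimised.

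It then remains to treat the primes $7 < p < p_0$ (together with $p = 5$, which Corollary~\ref{exist} in the form \eqref{dundee} does not cover, and which is trivial since $2$ is an LPR mod $5$: $\bar 2 = 3$). For these I would either (i) check \eqref{dundee} directly for each such $p$, which will succeed for all but a short list of primes with $p-1$ having many small prime factors (e.g.\ $p-1$ a primorial or twice one), and then (ii) for that residual short list, exhibit an explicit LPR modulo $p$ by a brief direct search, confirming for each that some primitive root $g$ satisfies $g + \bar g$ odd. The output is the finite list $\{3,7\}$ of exceptions claimed.

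The main obstacle is the second, computational, stage: the factor $W_{p-1}$ makes \eqref{dundee} genuinely fail for infinitely many-looking-but-finitely-many $p$ in the medium range, so one cannot avoid a nontrivial (though routine) sieve-and-search over all primes up to $p_0$, and one must be careful that the threshold $p_0$ coming from the explicit $W_{p-1}$ bound is small enough to make this search feasible. A secondary point to get right is the precise constant in the explicit upper bound for $2^{\omega(p-1)}$, since a weak constant there inflates $p_0$ substantially; using the sharp primorial-based bound keeps the finite check manageable.
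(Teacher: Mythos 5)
Your overall strategy (explicit criterion plus a finite computation) is the same in spirit as the paper's, but there is a genuine quantitative gap that would sink the argument as you have set it up: you have badly underestimated where the unsieved criterion (\ref{dundee}) takes over. Feeding a Robin-type bound $\omega(p-1)\leq 1.4\log p/\log\log p$ (the bound the paper itself uses) into (\ref{dundee}) only yields the inequality for all $p$ once $p$ is of size roughly $10^{13}$--$10^{14}$, not ``a few thousand to $10^5$''; equivalently, it only disposes of $\omega(p-1)\geq 13$. For each fixed $\omega(p-1)=k$ with $7\leq k\leq 12$ there is a large window between the smallest prime with $k$ distinct prime factors of $p-1$ and the point where $p^{1/2}>2^k\log^2 p+p^{-1/2}$ kicks in: for example, for $k=10$ the window runs from about $6.5\times 10^{9}$ to about $6\times 10^{11}$, and for $k=12$ from about $7.4\times 10^{12}$ to about $1.5\times 10^{13}$. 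So the set of primes not covered by (\ref{dundee}) is not ``a short list with $p-1$ essentially a primorial''; it includes all of the roughly $3.7\times 10^{7}$ primes below $7\times 10^{8}$ together with sparse but hard-to-enumerate primes ranging up to $10^{13}$, and a brute-force LPR search over those windows is not feasible.

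This is exactly why the paper does not rely on Corollary \ref{exist} alone: it introduces the sieve of Lemma \ref{sieve} and Theorem \ref{silver}, whose criterion (\ref{purple}) carries the factor $W(f)$ of the small \emph{core} $f$ of $p-1$ rather than the full $W_{p-1}$. With suitable choices of $s$ this eliminates $\omega(p-1)=12,11,10$ outright (the sieved threshold falls below the least admissible $p$), reduces $\omega(p-1)=9$ to an enumeration of primes of the form $210k+1$ below $1.3\times 10^{9}$ (81 survivors, checked directly), and shrinks the $\omega(p-1)=7,8$ and $\omega(p-1)\leq 6$ cases to $p\leq 7.1\times 10^{8}$, which is then settled by directly exhibiting an LPR for every prime in that range. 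To repair your proposal you would either need to import this sieving step (or some equivalent device that replaces $W_{p-1}$ by the contribution of only a few small primes), or design a divide-and-conquer enumeration of the exceptional windows up to $10^{13}$, which is a substantially larger computation than you have budgeted for.
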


Finally we obtain an  improvement to (\ref{wangwang}), namely,
\begin{equation} \label{indigo}
\left|G_p- \frac{\theta_{p-1}^2}{4}(p-2)\right| <\frac{\theta_{p-1}^2}{8}[W_{p-1}^2(9\log^2 p+1) -1]p^{\frac{1}{2}}, \quad p> 3.
\end{equation}
Of course, (\ref{indigo}) implies that. for sufficiently large primes $p$, there exists a pair $(a,b)$  of LPR modulo $p$ such that $a+b\equiv 1  \pmod p$.  We defer a full discussion 
of the existence question, however, to a future investigation.  

The outline of this paper is as follows. In \S \ref{Twain} we give bounds for the function $T_{m}$ introduced in (\ref{Tm}). In \S \ref{Thackeray} we prove Theorem \ref{straw}, which is an explicit version of (\ref{zhang}). In \S\ref{Forster} we prove Theorem \ref{Np} and introduce a sieve. This enables us to prove Theorem \ref{lpr} in \S\ref{Conrad}. Finally, in \S \ref{Macauley} we prove (\ref{indigo}) in Theorem \ref{cream} thereby improving on the main result from Wang and Wang \cite{WW}. 

The authors are grateful to Maike Massierer who provided much useful advice relating to the computations in \S \ref{Conrad}.

\section{Bounds for  $T_m$}\label{Twain}

The sum $T_m$  is relevant to previous work on Lehmer numbers  (such as \cite{zhang94} and \cite{WW}).
 For explicit results it is helpful to have better bounds than those used in these papers.  Here, Lemma \ref{red} below (while not best possible) is sufficient for
 our purposes.  Indeed, only the upper bound is needed in what follows.    We remark that $1+\log(\frac{2}{\pi})=0.54841\ldots$.

\begin{lem}\label{red}
 For any odd integer $m \geq 3$ we have
 \begin{equation}\label{tartan}
\frac{2}{\pi}\left(1+ \frac{0.548}{\log {m}}\right) <T_m< \frac{2}{\pi}\left(1+ \frac{1.549}{\log {m}}\right).
\end{equation}
In particular, if $m\geq 1637$, then  $T_m^2< \frac{1}{2}$.
 \end{lem}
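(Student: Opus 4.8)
The plan is to estimate the sum $S_m := \sum_{j=1}^{(m-1)/2}\tan(\pi j/m)$ both above and below, and then divide by $\tfrac12 m\log m$ to get the stated bounds on $T_m$. The natural approach is to compare $S_m$ with the integral $\int_0^{1/2}\tan(\pi x)\,dx$, which diverges logarithmically at $x=1/2$, so one must be careful near the top of the range. First I would split off the last term $\tan(\pi(m-1)/(2m)) = \cot(\pi/(2m))$, which is the dominant contribution and is comparable to $2m/\pi$; more precisely $\cot(\pi/(2m)) = \tfrac{2m}{\pi} + O(1/m)$. For the remaining sum $\sum_{j=1}^{(m-3)/2}\tan(\pi j/m)$ I would use monotonicity of $\tan$ on $[0,\pi/2)$ to sandwich it between $\int_0^{(m-3)/(2m)}\tan(\pi x)\,dx$ (with a suitable shift) and the same integral over a slightly larger interval, i.e.\ the standard integral comparison $\int_a^{b}f \le \sum f(j) \le \int_{a-1}^{b-1} f$ type bounds with $f(j)=\tan(\pi j/m)$ rescaled.

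The integral evaluates cleanly: $\int \tan(\pi x)\,dx = -\tfrac1\pi\log\cos(\pi x)$, so $\int_0^{c}\tan(\pi x)\,dx = -\tfrac1\pi\log\cos(\pi c)$. Evaluating at $c$ near $1/2$, say $c = \tfrac{m-3}{2m} = \tfrac12 - \tfrac{3}{2m}$, gives $\cos(\pi c) = \sin(3\pi/(2m)) \approx 3\pi/(2m)$, so the integral is $\approx \tfrac1\pi\log\tfrac{2m}{3\pi}$. Combining with the $\cot(\pi/(2m))$ term, $S_m \approx \tfrac{2m}{\pi} + \tfrac{m}{\pi}\cdot\tfrac1m\cdot(\text{stuff})$… — actually the integral term is $O(\log m)$ while the leading term is $\tfrac{2m}{\pi}$, so $S_m = \tfrac{2m}{\pi} + O(\log m)$ and hence $T_m = \tfrac{2S_m}{m\log m} = \tfrac{2}{\pi}\cdot\tfrac{1}{\log m}\cdot\big(\tfrac{2m}{\pi}\big)^{-1}\cdot\ldots$ — let me restate: $T_m = \tfrac{2}{\pi}\big(1 + O(1/\log m)\big)$, and the content of the lemma is pinning the implied constant between $0.548$ and $1.549$. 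So the real work is tracking the $O(1/\log m)$ term explicitly: writing $S_m = \tfrac{2m}{\pi}\big(1 + \tfrac{\delta_m}{\log m}\big)$ and showing $0.548 < \delta_m < 1.549$ uniformly for odd $m\ge 3$.

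To get uniformity down to small $m$, I expect one needs two ingredients. For large $m$ (say $m \ge m_0$ for some explicit threshold), the integral comparison plus explicit bounds on the error terms ($\cot(\pi/(2m)) - \tfrac{2m}{\pi}$, and the discrepancy between the sum and the integral, each $O(1)$ or $O(1/m)$ with computable constants, using e.g.\ convexity of $\tan$ to bound the Euler–Maclaurin-type error) should give the result, the constant $1+\log(2/\pi) = 0.54841\ldots$ emerging precisely as $\lim_{m\to\infty}\delta_m$ from the $-\tfrac1\pi\log\cos$ evaluated at the endpoint together with the $\log m$ normalization — this explains the remark preceding the lemma. For the finitely many small odd $m < m_0$, one simply computes $T_m$ directly and checks the inequality numerically. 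The last sentence, that $T_m^2 < \tfrac12$ for $m \ge 1637$, then follows from the upper bound: one needs $\tfrac{2}{\pi}\big(1 + \tfrac{1.549}{\log m}\big) < \tfrac{1}{\sqrt2}$, i.e.\ $1 + \tfrac{1.549}{\log m} < \tfrac{\pi}{2\sqrt2} = 1.1107\ldots$, i.e.\ $\log m > 1.549/0.1107\ldots = 13.99\ldots$, i.e.\ $m > e^{13.99\ldots} \approx 1189000$ — hmm, that gives a far larger threshold than $1637$, so I would need a \emph{sharper} upper bound than \eqref{tartan} for this particular consequence, or the constant in the lemma is genuinely smaller for moderately large $m$; in any case the clean way is: once \eqref{tartan} is proved, for the $T_m^2 < 1/2$ claim re-examine the error terms to get a better constant (the $1.549$ is an all-$m$ bound and is wasteful for large $m$), or directly verify $S_m < \tfrac{m}{\sqrt2}\cdot\tfrac{\log m}{2}$ for $m$ in a suitable range and monotonicity beyond.

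The main obstacle I anticipate is the non-uniformity near $x = 1/2$: the integrand $\tan(\pi x)$ blows up, so the crude "sum $\le$ integral over shifted interval" bound loses too much precisely where the mass concentrates. Handling the top one or two terms separately by exact trigonometric identities ($\cot$ and its Taylor expansion with explicit remainder) is essential, and getting the constants $0.548$/$1.549$ tight enough to be useful — while still valid for \emph{all} odd $m\ge 3$ including tiny cases — will require combining an explicit asymptotic analysis for large $m$ with a finite numerical check for small $m$.
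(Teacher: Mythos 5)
Your overall strategy is the paper's: compare the tangent sum with $\int_0^{(m-1)/2}\tan(\pi x/m)\,dx=\frac{m}{\pi}\log\csc\frac{\pi}{2m}$, handle the blow-up at the top of the range by treating the last term $\cot\frac{\pi}{2m}$ separately with an explicit Taylor-type remainder, settle small $m$ by direct computation, and — as you correctly diagnosed — the upper bound in (\ref{tartan}) alone yields $T_m^2<\frac12$ only for $m$ beyond roughly $1.2\times 10^6$, so the range $1637\le m<1200001$ must be closed by direct calculation; this is exactly how the paper proceeds (bound for $m>1200001$, computation below).

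Two concrete points need repair before your sketch becomes a proof. First, you dropped the factor $m$ in the sum-to-integral comparison: the bulk term is $\frac{m}{\pi}\log\csc\frac{\pi}{2m}\sim\frac{m}{\pi}\log m$, which is the \emph{dominant} part of the sum, while the endpoint term $\cot\frac{\pi}{2m}\approx\frac{2m}{\pi}$ is of lower order and is what feeds the $1/\log m$ correction. Consequently the intermediate assertions ``$S_m=\frac{2m}{\pi}+O(\log m)$'' and the normalisation ``$S_m=\frac{2m}{\pi}\bigl(1+\frac{\delta_m}{\log m}\bigr)$'' are false as written (they would force $T_m\to 0$); the correct form is $S_m=\frac{m}{\pi}(\log m+\delta_m)$, so that $T_m=\frac{2}{\pi}\bigl(1+\frac{\delta_m}{\log m}\bigr)$. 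Second, for the lower bound your plan (peel off the top term, bound the rest below by the integral using monotonicity) gives asymptotic constant $\log(2/\pi)+2-\log 3\approx 0.45$, which falls short of $0.548$; you must either peel off at least two top terms (giving $\log(2/\pi)+\tfrac83-\log 5\approx 0.61$), or do what the paper does: use convexity of $\tan$, so that the trapezoidal sum exceeds the integral, which delivers the constant $1+\log(2/\pi)=0.548\ldots$ directly. With those adjustments, plus explicit thresholds and a finite check for small odd $m$, your argument matches the paper's proof.
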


 \begin{proof}  We begin with the upper bound for $T_m$ in (\ref{tartan}). Since $\tan x$ is an increasing function for $0 \leq x < \pi/2$, then
  $$S_m=\sum_{j=0}^{(m-3)/2} \tan  \left(\frac{\pi j}{m}\right)= \sum_{j=1}^{(m-3)/2} \tan\left(\frac{\pi j}{m}\right)$$
 is a left-Riemmann sum  (with unit intervals) for the integral $\int_{0}^{(m-1)/2}  \tan\left(\frac{\pi x}{2}\right) dx$, so that
 \[ S_m < \frac{m}{\pi}\log \sec\left(\frac{\pi(m-1)}{2m}\right)= \frac{m}{\pi}\log \csc\left(\frac{\pi }{2m}\right)\]
 Hence
 \[ T_m m \log m<\frac{2m}{\pi}\log \csc\left(\frac{\pi }{2m}\right)+ 2\tan\left(\frac{\pi (m-1)}{2m}\right)<\frac{2m}{\pi}\log \csc\left(\frac{\pi }{2m}\right)+2 \csc\left(\frac{\pi}{2m}\right)\]
 Now $\sin x > x-x^3/6$, whence, with $\beta= \pi^2/(24m^2)$,
 $$\csc \left(\frac{\pi}{2m}\right)<\frac{2m}{\pi}(1-\beta)^{-1}< \frac{2m}{\pi}(1+2 \beta), $$
since, certainly,  $\beta< 1/2$.  It follows that
\[T_m m\log m < \frac{2m}{\pi}(\log m + \log(2/\pi)+ 2\beta+ 2+ 4 \beta)<\frac{2m}{\pi}(\log m + 1.549)\]
provided $m >101$.  The first claimed inequality follows for $m\geq 101$.  In fact, by calculation it is also true for all smaller values of $m$.

 From this, if $m>1200001$,  we have $T_m< 0.7071$ and hence $T_m^2< 1/2$.  By direct calculation, this
inequality also holds for $1637 \leq m <1200001$.

For the left hand inequality of (\ref{tartan}), we exploit the fact that $S_m+\tan\left(\frac{\pi (m-1)}{2m}\right)$ is the trapezoidal rule approximation to the integral
$\int_{0}^{(m-1)/2} 2 \tan\left(\frac{\pi x}{2}\right) dx$.  Indeed, since the integrand is concave up, the error term (involving the second derivative) is negative, i.e., the
sum exceeds the integral.  Hence
\[T_m m \log m>  \frac{2m}{\pi}\log \csc\left(\frac{\pi }{2m}\right)+ \tan\left(\frac{\pi (m-1)}{2m}\right)> \frac{2m}{\pi}\log\frac{2m}{\pi}+\cot\frac{\pi}{2m}.\]
For $0<x<1, \cos x >1-x^2/2$ and $\sin x <x$ so that  $\cot\frac{\pi}{2m}>\frac{2m}{\pi}\left(1-\frac{\pi^2}{8m^2}\right)>\frac{2m}{\pi}(1-0.0001)$ whenever $m\geq111$.
Moreover, $ \csc\left(\frac{\pi }{2m}\right) >\frac{2m}{\pi}$, whence, whenever $m \geq 111$,
\[T_m>\frac{2}{\pi}\left(1+\frac{\frac{2}{\pi}+1-0.0001}{\log m}\right) \]
The result follows for $m\geq 111$. It also holds when $3\leq m<111$ by direct calculation.
\end{proof}
\section{The number of Lehmer numbers modulo $p$}\label{Thackeray}
We turn to making (\ref{zhang}) explicit.  For this we acknowledge the ideas of \cite{zhang94} and \cite{WW}.
\begin{thm}\label{straw}
Suppose $p>3$ is a prime. Then
\begin{equation} \label{zhang2}
  \left|M_p -\frac{p-1}{2}\right| <T_p^2 p^{\frac{1}{2}} \log^2 p.\end{equation} Moreover, for all $p$ we have
\begin{equation}\label{chips}
\left|M_p -\frac{p-1}{2}\right| <\frac{1}{2}p^{\frac{1}{2}} \log^2 p.
\end{equation}
\end{thm}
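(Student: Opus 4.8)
The plan is to detect whether $a+\bar a$ is odd by the elementary identity
\[M_p=\sum_{a=1}^{p-1}\frac{1-(-1)^{a+\bar a}}{2}=\frac{p-1}{2}-\frac12\sum_{a=1}^{p-1}(-1)^a(-1)^{\bar a},\]
so that everything reduces to estimating the character sum $\Sigma_p:=\sum_{a=1}^{p-1}(-1)^a(-1)^{\bar a}$. The key input, used already in \cite{zhang94} and \cite{WW}, is the exact finite Fourier expansion
\[(-1)^a=-\frac{2}{p}\sum_{j=1}^{(p-1)/2}\tan\!\Bigl(\frac{\pi j}{p}\Bigr)\sin\!\Bigl(\frac{2\pi j a}{p}\Bigr)\qquad(1\le a\le p-1),\]
which I would obtain by computing the Fourier coefficients of the function $a\mapsto(-1)^a$ on the residues $0,1,\dots,p-1$: writing $e_p(x):=e^{2\pi i x/p}$, the coefficient at frequency $j$ is $\tfrac1p\sum_{a=0}^{p-1}(-1)^ae_p(-ja)=\tfrac1p\cdot\frac{2}{1+e_p(-j)}=\tfrac1p\bigl(1+i\tan(\pi j/p)\bigr)$ (by summing a geometric series, using that $p$ is odd), and then symmetrising the frequencies $j\leftrightarrow p-j$, via $\tan(\pi(p-j)/p)=-\tan(\pi j/p)$, which kills the constant terms and leaves only the sines.

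Substituting this expansion for both $(-1)^a$ and $(-1)^{\bar a}$ and interchanging the order of summation gives
\[\Sigma_p=\frac{4}{p^2}\sum_{j=1}^{(p-1)/2}\sum_{k=1}^{(p-1)/2}\tan\!\Bigl(\frac{\pi j}{p}\Bigr)\tan\!\Bigl(\frac{\pi k}{p}\Bigr)\sum_{a=1}^{p-1}\sin\!\Bigl(\frac{2\pi j a}{p}\Bigr)\sin\!\Bigl(\frac{2\pi k \bar a}{p}\Bigr).\]
Expanding each sine into exponentials turns the innermost sum into a combination of four Kloosterman sums $\sum_{a=1}^{p-1}e_p(\alpha a+\beta\bar a)$ with $\alpha\equiv\pm j$, $\beta\equiv\pm k$; since $1\le j,k\le(p-1)/2$ none of these is $\equiv0\pmod p$, so Weil's bound $\bigl|\sum_{a=1}^{p-1}e_p(\alpha a+\beta\bar a)\bigr|\le2\sqrt p$ applies to each, whence $\bigl|\sum_a\sin(\cdot)\sin(\cdot)\bigr|\le2\sqrt p$. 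As $\tan(\pi j/p)>0$ throughout the range of summation, taking absolute values yields
\[|\Sigma_p|\le\frac{8\sqrt p}{p^2}\Bigl(\sum_{j=1}^{(p-1)/2}\tan(\pi j/p)\Bigr)^{2}=\frac{8\sqrt p}{p^2}\Bigl(\frac{T_p\,p\log p}{2}\Bigr)^{2}=2T_p^2\,p^{1/2}\log^2 p,\]
where I have used the definition (\ref{Tm}) of $T_p$. Halving gives $\bigl|M_p-\tfrac{p-1}{2}\bigr|\le T_p^2p^{1/2}\log^2 p$, which is (\ref{zhang2}) up to the strictness of the inequality; strictness follows because the Weil bound is never attained by a Kloosterman sum modulo a prime (already $|\sum_{a=1}^{p-1}e_p(a+\bar a)|<2\sqrt p$), so not all of the estimates above can be equalities.

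For (\ref{chips}) there are two regimes. If $p\ge1637$ then Lemma \ref{red} gives $T_p^2<\tfrac12$, and (\ref{chips}) is immediate from (\ref{zhang2}). For the finitely many odd primes $3\le p<1637$ the bound (\ref{zhang2}) is not by itself enough (there $T_p^2$ can exceed $\tfrac12$), but the trivial bound suffices: since $0\le M_p\le p-1$ one has $\bigl|M_p-\tfrac{p-1}{2}\bigr|\le\tfrac{p-1}{2}$, and $\tfrac{p-1}{2}<\tfrac12p^{1/2}\log^2 p$ because $p-1<p^{1/2}\log^2 p$, equivalently $\sqrt p-1/\sqrt p<\log^2 p$, which is easily checked to hold throughout $3\le p<1637$. (Alternatively these few cases can be settled by directly computing $M_p$.) The only non-routine ingredient is Weil's bound for Kloosterman sums, which I would simply quote; the points that need a little care are the passage from $\le$ to the strict $<$ in (\ref{zhang2}) and the elementary estimate $p-1<p^{1/2}\log^2 p$ invoked for small primes, while everything else is bookkeeping with the tangent identity and the definition of $T_p$.
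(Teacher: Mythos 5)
Your proposal is correct and is essentially the paper's own argument: you write $M_p-\frac{p-1}{2}=-\frac12\sum_a(-1)^{a+\bar a}$, expand the parity function in additive characters with tangent coefficients (the paper does the same via $\sum_{r=1}^{p-1}(-1)^r\psi(-jr)=i\tan(\pi j/p)$, which is just the complex form of your sine expansion), invoke the Weil bound $2p^{1/2}$ for the resulting Kloosterman sums, and recognise the tangent sum as $\tfrac12 T_p\,p\log p$, arriving at the identical bound $|E_p|\le 2T_p^2p^{1/2}\log^2 p$, after which (\ref{chips}) follows from Lemma \ref{red} for $p\ge 1637$ and a check of small primes. The only cosmetic deviations are your use of the trivial bound $\frac{p-1}{2}<\frac12 p^{1/2}\log^2 p$ for $p<1637$ in place of the paper's ``small computation,'' and your explicit (correct, though merely quoted) remark on strictness of the Weil bound.
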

\begin{proof}  Evidently,
\begin{equation}\label{lime}
M_p = \frac{1}{2} \sum_{a=1}^{p-1}(1-(-1)^{a+\bar{a}})=\frac{p-1}{2} -\sum_{a=1}^{p-1}(-1)^{a+\bar{a}}=\frac{p-1}{2} -\frac{1}{2}E_p,
\end{equation}
say. Let $\psi$ be the additive character on the integers modulo $p$ defined by $\psi(a)=\exp(2\pi i a/p)$. Express the function $(-1)^a$ in terms of additive characters modulo $p$ using
the transformation
\[(-1)^a =\frac{1}{p}\sum_{r=1}^{p-1}\sum_{j=0}^{p-1}(-1)^r\psi(j(a-r))=\frac{1}{p}\sum_{r=1}^{p-1}(-1)^r\sum_{j=0}^{p-1}\psi(j(a-r)).\]
Similarly,
\[(-1)^{\bar{a}} =\frac{1}{p}\sum_{s=1}^{p-1}(-1)^s\sum_{k=0}^{p-1}\psi(k(\bar{a}-s)).\]
Hence,
\[E_p= \frac{1}{p^2}\sum_{j,k=0}^{p-1}\sum_{a=1}^{p-1}\psi(ja+k\bar{a})
\sum_{r=1}^{p-1}(-1)^r\psi(-jr)\sum_{s=1}^{p-1}(-1)^s \psi(-ks).\]
Notice that, if $j=0$, then $\sum_{r=1}^{p-1}(-1)^r\psi(-jr) =\sum_{r=1}^{p-1}(-1)^r=0$, since $p$ is odd.   Hence, we can suppose the range of $j$ and, similarly, of $k$ in $E_p$ runs from
$1$ to $p-1$.  Thus
\begin{equation}\label{pink}
|E_p|=\frac{1}{p^2}\sum_{j,k=1}^{p-1}\big{|}\sum_{a=1}^{p-1}\psi(ja+k\bar{a})\big{|} \left|\sum_{r=1}^{p-1}(-1)^r\psi(-jr)\right|
\left|\sum_{s=1}^{p-1}(-1)^s \psi(-ks)\right|.
\end{equation}
Now $\sum_{a=1}^{p-1}\psi(ja+k\bar{a})$ is a Kloosterman sum and so is bounded by $2p^{\frac{1}{2}}$, whatever the values of $j,k$.

Next, in (\ref{pink}),

\[\sum_{r=1}^{p-1}(-1)^r\psi(-jr)=\frac{1-\exp(2\pi ij/p)}{1+\exp(2\pi ij/p)}=\frac {i \sin (\pi j/p)}{\cos (\pi j/p)}.\]
Moreover,
\[\sum_{j=1}^{(p-1)}\left|\frac { \sin (\pi j/p)}{\cos (\pi j/p)}\right|=2\sum_{j=1}^{(p-1)/2} \tan \left(\frac{\pi j}{p}\right)=T_p\ p \log p, \]
by the definition (\ref{Tm}).  It follows that
\[\left|\sum_{r=1}^{p-1}(-1)^r\psi(-jr)\right| < T_p \  p \log p,\]
 and, similarly, for the sum in (\ref{pink}) over $k$.

 Applying these bounds to (\ref{pink}), from (\ref{lime}) we deduce (\ref{zhang2}). Using Lemma 1 and a small computation we deduce (\ref{chips}).
\end{proof}

\section{A slight extension of Theorem \ref{Np} and its proof}\label{Forster}

Throughout let $p >3$ be a prime. All references given will be modulo $p$ (unless otherwise mentioned).
We begin by extending the concept of a primitive root (as used in a number of papers such as \cite{COT}).  For any {\em even} divisor $e$ of $p-1$
an integer $a$ (indivisible by $p$) will be said to be $e$-free if $a \equiv b^d \pmod p$ for an integer $b$ and divisor $d$ of $e$ implies
$d=1$.  Thus $a$ is a primitive root if it is $p-1$-free,   Indeed, $a$ is a primitive root if and only if $a$ is $l$-free for all prime divisors $l$ of $p-1$.
More generally, $a$ is $e$-free if and only if it is $l$-free for all prime divisors $l$ of $e$. It follows that the proportion of integers in $[1,p-1]$ which are $e$-free
is $\theta_e$ and therefore that their total number is $\theta_e (p-1)$.

Now, the  function
\begin{equation*}\label{efree}
\theta_e\sum_{d|e}\frac{\mu(d)}{\phi(d)} \sum_{\chi_d}\chi_d
\end{equation*}
acting on integers $a$ (indivisible by $p$) takes the value $1$ if $a$ is $e$-free and is zero, otherwise.  Here the sum over $\chi_d$ is  over all $\phi(d)$
multiplicative characters $\chi_d$ modulo $p$ of  order $d$.

The criterion for an integer $a$ with $1 \leq a \leq p-1$ to be a Lehmer number is that $\frac{1}{2}(1- (-1)^{a+\bar{a}})=1$ (and not 0).  For any divisor $e$ of $p-1$,
write $N_p(e)=N(e)$ for the number of Lehmer numbers $a$  such that $a$ is also $e$-free.  In particular, $N(p-1)=N_p$ is the number of LPRs modulo $p$.  By the above,
\begin{equation*}\label{Neexpr}
N(e) = \frac{1}{2} \theta_e \sum_{d|e} \frac{\mu(d)}{\phi(d)} \sum_{\chi_d}\sum_{1 \leq a \leq p-1}(1-(-1)^{a+\bar{a}})\chi_d(a).
\end{equation*}
In fact the sum $\theta_e \sum_{d|e} \frac{\mu(d)}{\phi(d)} \sum_{\chi_d}\sum_{1 \leq a \leq p-1}\chi_d(a)$ simply yields the number of $e$-free integers modulo $p$, namely
$\theta_e (p-1)$.  Hence
\begin{equation}\label{white}
N(e)=\frac{\theta_e}{2} (p-1)-\frac{1}{2}E(e),
\end{equation}
where
\begin{equation}\label{grey}
E(e) = \theta_e \sum_{d|e} \frac{\mu(d)}{\phi(d)} \sum_{\chi_d}\sum_{a=1}^{p-1}(-1)^{a+\bar{a}}\chi_d(a).
\end{equation}

As for (\ref{pink}) we obtain
\
\begin{equation}\label{black}
|E(e)|=\frac{\theta_e}{p^2}\sum_{d|e}\frac{|\mu(d)|}{\phi(d)}\sum_{\chi_d}\sum_{j,k=1}^{p-1}\big{|}\sum_{a=1}^{p-1}\chi_d(a)\psi(ja+k\bar{a})\big{|} \left|\sum_{r=1}^{p-1}(-1)^r\psi(-jr)\right|
\left|\sum_{s=1}^{p-1}(-1)^s \psi(-ks)\right|.
\end{equation}

Now, regarding $(ja+k\bar{a})$ in (\ref{black}) as the rational function $(ja^2+k)/a$, we have, by a theorem of Castro and Moreno (see (1.4) of \cite{CP}), that,
for each pair $(j,k)$ with $1 \leq j,k \leq p-1$,
\begin{equation} \label{blue}
\big{|}\sum_{a=1}^{p-1}\chi_d(a)\psi(ja+k\bar{a})\big{|} \leq 2p^{\frac{1}{2}},
\end{equation}
a bound which is independent of $j$ and $k$.

As we have already seen
\begin{equation} \label{Tp}
\left|\sum_{r=1}^{p-1}(-1)^r\psi(-jr)\right | < T_p \  p \log p,
\end{equation}
 and, similarly, for the sum in (\ref{black}) over $k$.

Since there are $\phi(d)$ characters $\chi_d$ of degree $d$ and $\sum_{d|e}|\mu(d)|=W_e$, we deduce from (\ref{black}) by means of the bounds (\ref{blue})
and (\ref{Tp}) that
\begin{equation} \label{green}
 |E(e)| <2\theta_eW_eT_p^2 \ p^{\frac{1}{2}}\log^2 p.
\end{equation}

Hence  (\ref{Npeq}) is immediate from (\ref{green}) with $e=p-1$ and (\ref{Npeq1}) follows by
    Lemma \ref{red}.  More generally, by means of Lemma \ref{red}, we have established the following extension of Theorem \ref{Np}.

  \begin{thm} \label{Ne}
Let $p>3$ be a prime and $e$ an even divisor of $p-1$.  Then
\begin{equation}\label{Neeq}
\left|N_p(e)- \frac{\theta_e}{2}(p-1)\right| < T_p^2\theta_e W_ep^{\frac{1}{2}}\log^2 p.
\end{equation}
%For example, if $p \geq 1637$, then
%\begin{equation}\label{Neeq1}
%\left|N_p(e)- \frac{\theta_e}{2}(p-1)\right| <\frac{1}{2}\theta_eW_ep^{\frac{1}{2}} \log^2 p,
%\end{equation}
\end{thm}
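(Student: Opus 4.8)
The plan is to assemble the two ingredients already developed in this section. The exact identity (\ref{white}), which holds for every divisor $e$ of $p-1$, states that $N_p(e) = \tfrac{\theta_e}{2}(p-1) - \tfrac12 E(e)$, so that
\[
\left| N_p(e) - \tfrac{\theta_e}{2}(p-1)\right| = \tfrac12 |E(e)|.
\]
Hence Theorem \ref{Ne} is precisely inequality (\ref{green}), namely $|E(e)| < 2\theta_e W_e T_p^2 p^{1/2}\log^2 p$, divided by two. So all that needs checking is that (\ref{green}) is valid for an arbitrary even divisor $e$ of $p-1$ and not only for $e = p-1$.

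To see that, I would retrace the derivation (\ref{grey})--(\ref{green}) with $e$ general. The character expansion (\ref{grey}) of $E(e)$ over multiplicative characters $\chi_d$ of order $d\mid e$ together with additive characters requires nothing special of $e$; proceeding exactly as in the passage from (\ref{lime}) to (\ref{pink}) one reaches (\ref{black}). One then inserts the Castro--Moreno bound (\ref{blue}), which is uniform in $j$, $k$, and in the multiplicative character, and the tangent-sum bound (\ref{Tp}) in the $j$- and $k$-summations. Using that there are $\phi(d)$ characters of each order $d$ and that $\sum_{d\mid e}|\mu(d)| = W_e$ collapses the outer sums and yields (\ref{green}), whence (\ref{Neeq}). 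Specialising to $e = p-1$ recovers (\ref{Npeq}).

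I do not anticipate any real obstacle: every estimate needed was already arranged for the case $e = p-1$, and the hypothesis ``$e$ even'' plays no role in the bound itself --- it only keeps $N_p(e)$ inside the $e$-free framework that the sieve of the next section will exploit. The single point that warrants care is bookkeeping of the constant: one should keep $T_p^2$ explicit in (\ref{Neeq}) rather than replacing it prematurely via Lemma \ref{red}, because the sharper ``in particular'' forms (\ref{Npeq1}) and (\ref{chips}) are obtained afterwards by substituting $T_p^2 < \tfrac12$ for $p \geq 1637$ and dealing with the finitely many smaller primes by direct computation.
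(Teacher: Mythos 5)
Your proposal is correct and follows the paper's own route exactly: the paper derives (\ref{white}), (\ref{grey}), (\ref{black}) and hence (\ref{green}) for a general even divisor $e$ of $p-1$ from the outset, using the Castro--Moreno bound (\ref{blue}) and the tangent-sum bound (\ref{Tp}), and Theorem \ref{Ne} is then immediate by combining (\ref{white}) with (\ref{green}); your remark about keeping $T_p^2$ explicit and only invoking Lemma \ref{red} afterwards matches the paper's treatment as well.
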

The estimate (\ref{Npeq}) of Theorem \ref{Np} follows from Theorem \ref{Ne}  by selecting $e=p-1$.  We deduce (\ref{Npeq1}) by Lemma \ref{red} for $p \geq 1637$ and then for smaller prime values by simple direct calculation.

\section {Proof of the existence theorem}\label{Conrad}
We shall use Theorem \ref{Np} to obtain an  existence result for (explicitly) large primes $p$.  In order to extend the range of the method, however, we first describe a ``sieving"
approach based on Theorem \ref{Ne} similar to that used in \cite{COT} and many other papers associated with the authors.

Set $\omega=\omega(p-1)$.  Let $f$ be an even divisor of $p-1$ which is the product of the $r(\geq 1)$ smallest distinct prime factors of $p-1$ ($f$ is the {\em core}).
 Further  let the remaining distinct prime factors of $p-1$ be $p_1, \ldots, p_s$ (the sieving primes). Define $\delta= 1 - \sum_{i=1}^s\frac{1}{p_i}$. As in previous work on related problems (\cite{COT15} and \cite{COT}) we have the following.

 \begin{lem}\label{sieve} With the above notation,
 \[N_p \geq \sum_{i=1}^sN(p_if)- (s-1)N(f).\]
 Hence
 \begin{equation}\label{old}
 N_p \geq \sum_{i=1}^s[N(p_if)- \theta_{p_i}N(f)]+\delta N(f).
 \end{equation}

 \end{lem}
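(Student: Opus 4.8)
The plan is to establish the first inequality by a union-bound / inclusion-exclusion argument on the ``bad'' events, and then derive the second inequality by elementary algebra from the first. Recall that $N(e)$ counts Lehmer numbers that are $e$-free, and that being $(p-1)$-free is equivalent to being simultaneously $\ell$-free for every prime $\ell \mid p-1$. Write $A$ for the set of Lehmer numbers $a \in [1,p-1]$ that are $f$-free (the core condition), so $|A| = N(f)$. For each sieving prime $p_i$, let $B_i \subseteq A$ be the subset of those $a \in A$ that are additionally $p_i$-free, so $|B_i| = N(p_i f)$ since $p_i f$ is the product of the core primes together with $p_i$. A Lehmer number is an LPR precisely when it lies in $A$ and is $p_i$-free for \emph{all} $i=1,\dots,s$, i.e.\ $N_p = |B_1 \cap \cdots \cap B_s|$.

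First I would bound $|B_1 \cap \cdots \cap B_s|$ from below by the standard ``Bonferroni at the first step'' inequality inside $A$: for any subsets $B_1,\dots,B_s$ of a finite set $A$,
\[
\Big|\bigcap_{i=1}^s B_i\Big| \;\geq\; \sum_{i=1}^s |B_i| - (s-1)|A|,
\]
which follows immediately by writing $A \setminus \bigcap_i B_i = \bigcup_i (A \setminus B_i)$ and applying subadditivity: $|A| - |\bigcap_i B_i| \leq \sum_i (|A| - |B_i|)$, then rearranging. Substituting $|B_i| = N(p_i f)$ and $|A| = N(f)$ gives the first displayed inequality of the lemma. This is the only genuine content; there is no real obstacle here beyond making sure the combinatorial identification of the sets is correct, in particular that $N(p_i f)$ really does count the elements of $A$ that are also $p_i$-free (this is exactly the ``$e$-free iff $\ell$-free for all $\ell \mid e$'' remark from \S\ref{Forster}).

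For the second inequality (\ref{old}), I would simply rewrite the right-hand side of the first inequality. Since $\theta_{p_i} = 1 - 1/p_i$, we have $\sum_{i=1}^s \theta_{p_i} = s - \sum_{i=1}^s 1/p_i = s - (1-\delta) = s - 1 + \delta$, so that
\[
\sum_{i=1}^s N(p_i f) - (s-1) N(f)
= \sum_{i=1}^s \big[N(p_i f) - \theta_{p_i} N(f)\big] + \Big(\sum_{i=1}^s \theta_{p_i} - (s-1)\Big) N(f)
= \sum_{i=1}^s \big[N(p_i f) - \theta_{p_i} N(f)\big] + \delta N(f).
\]
Combining this with the first inequality yields (\ref{old}), completing the proof. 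The point of the rearrangement, to be exploited afterwards, is that each bracketed term $N(p_i f) - \theta_{p_i} N(f)$ compares two quantities whose main terms $\tfrac{\theta_{p_i f}}{2}(p-1)$ and $\theta_{p_i}\cdot\tfrac{\theta_f}{2}(p-1)$ are \emph{equal} (because $\theta_{p_i f} = \theta_{p_i}\theta_f$ when $\gcd(p_i,f)=1$), so by Theorem \ref{Ne} each bracket is bounded in absolute value by an error term with no surviving main term, while the clean positive reserve $\delta N(f)$ is handled directly by Theorem \ref{Ne} with $e=f$.
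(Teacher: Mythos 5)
Your proof is correct, and it is essentially the standard sieving argument that the paper relies on by citation (the lemma is stated without proof, referring to \cite{COT15} and \cite{COT}, where exactly this inclusion--exclusion bound $|\bigcap_i B_i| \geq \sum_i |B_i| - (s-1)|A|$ together with the identity $\sum_i \theta_{p_i} = s-1+\delta$ is used). Both your combinatorial identification of $N(p_if)$, $N(f)$, $N_p$ via the ``$e$-free iff $\ell$-free for all primes $\ell \mid e$'' property and the algebraic rearrangement giving (\ref{old}) are exactly right.
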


 \begin{lem}\label{brown}
 Let $f$ be the core of $p-1$ and let $p_i$ be any prime dividing $p-1$ but not $f$ (as before). Then

\[ |N(p_if)- \theta_{p_i}N(f)|<2\left(1-\frac{1}{p_i}\right)W_fT_p^2 \ p^{\frac{1}{2}}\log^2 p.\]

\end{lem}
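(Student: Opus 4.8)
The plan is to reduce the estimate to the character-sum bounds already established in \S\ref{Forster}. By (\ref{white}) we have $N(e)=\frac{\theta_e}{2}(p-1)-\frac12E(e)$ for every even divisor $e$ of $p-1$. Since $p_i$ is a sieving prime, $p_i\nmid f$, and hence $\theta_{p_if}=\theta_{p_i}\theta_f$; consequently the ``main terms'' $\frac{\theta_{p_if}}{2}(p-1)$ and $\theta_{p_i}\cdot\frac{\theta_f}{2}(p-1)$ coincide and cancel. This gives
\[
N(p_if)-\theta_{p_i}N(f)=-\tfrac12\bigl(E(p_if)-\theta_{p_i}E(f)\bigr),
\]
so it suffices to bound $\bigl|E(p_if)-\theta_{p_i}E(f)\bigr|$ by $4\bigl(1-\tfrac1{p_i}\bigr)W_fT_p^2\,p^{1/2}\log^2 p$.

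Next I would expand $E(p_if)$ using the definition (\ref{grey}). Because $p_i$ is a prime not dividing $f$, every divisor of $p_if$ is either a divisor $d$ of $f$ or of the form $p_id$ with $d\mid f$, and these two families are disjoint. Writing $S(\chi)=\sum_{a=1}^{p-1}(-1)^{a+\bar a}\chi(a)$ and splitting the sum over $d\mid p_if$ accordingly, the part coming from the divisors $d\mid f$ equals $\theta_{p_if}\cdot\frac1{\theta_f}E(f)=\theta_{p_i}E(f)$ exactly, by multiplicativity of $\theta$. Hence
\[
E(p_if)-\theta_{p_i}E(f)=\theta_{p_if}\sum_{d\mid f}\frac{\mu(p_id)}{\phi(p_id)}\sum_{\chi_{p_id}}S(\chi_{p_id}),
\]
a sum running only over characters $\chi_{p_id}$ whose order is a multiple of $p_i$.

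It remains to bound this expression term by term. For \emph{any} multiplicative character $\chi$ modulo $p$ one has $|S(\chi)|\le 2T_p^2\,p^{1/2}\log^2 p$: this is exactly the per-character bound underlying (\ref{green}), obtained by expanding $(-1)^a$ and $(-1)^{\bar a}$ into additive characters, applying the Castro--Moreno bound (\ref{blue}) to the resulting Kloosterman-type sum $\sum_a\chi(a)\psi(ja+k\bar a)$, and using (\ref{Tp}) for the two sums over $r$ and $s$. The inner sum over $\chi_{p_id}$ has $\phi(p_id)$ terms, which cancels the $1/\phi(p_id)$; and $\mu(p_id)\neq0$ holds only for the $W_f$ squarefree divisors $d$ of $f$ (note $p_i\nmid d$). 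Therefore the double sum above is at most $W_f\cdot 2T_p^2\,p^{1/2}\log^2 p$ in absolute value, and since $\theta_{p_if}=\bigl(1-\tfrac1{p_i}\bigr)\theta_f\le 1-\tfrac1{p_i}$ we obtain $\bigl|E(p_if)-\theta_{p_i}E(f)\bigr|\le 2\bigl(1-\tfrac1{p_i}\bigr)W_fT_p^2\,p^{1/2}\log^2 p$. Dividing by $2$ yields the lemma (in fact with a constant $1$ rather than the stated $2$, so there is room to spare).

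The routine ingredients here — the Kloosterman/Castro--Moreno bound, the tangent-sum identity behind (\ref{Tp}), and counting squarefree divisors — are all already in place from \S\ref{Forster}. The only point needing genuine care is the cancellation step: one must check that the contribution to $E(p_if)$ of the divisors $d\mid f$ reproduces $\theta_{p_i}E(f)$ precisely, which rests on $\theta_{p_if}=\theta_{p_i}\theta_f$ (valid since $p_i\nmid f$) and on the fact that the set of characters of order $d$ is intrinsic to $p$ and does not depend on whether $d$ is viewed as a divisor of $f$ or of $p_if$. I expect this bookkeeping to be the main (modest) obstacle; everything else is a direct quotation of the estimates from the previous section.
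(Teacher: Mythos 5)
Your proposal is correct and follows essentially the same route as the paper: express $N(p_if)-\theta_{p_i}N(f)$ via (\ref{white}) and (\ref{grey}), use $\theta_{p_if}=\theta_{p_i}\theta_f$ to cancel the main terms and the divisors $d\mid f$, and bound the remaining sum over characters of order $p_id$ exactly as (\ref{green}) was deduced from (\ref{blue}) and (\ref{Tp}). The only difference is that you retain the factor $\tfrac12$ throughout and so obtain the bound with constant $1$ rather than $2$, which is a harmless sharpening of the stated lemma.
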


\begin{proof}
We have $D=N(p_if)- \theta_{p_i}N(f)=\frac{1}{2}(E(p_i f)-\theta_{p_i}E(f))$, where $E(e)$ is defined in (\ref{grey}).
Since $\theta_{p_i f}=\theta_{p_i} \theta_f=\left(1-\frac{1}{p_i}\right)\theta_f$ then, as in (\ref{black}),
\begin{equation}\label{mauve}
|D|=\frac{\theta(p_if)}{p^2}\sum_{d|f}\frac{|\mu(p_id)|}{\phi(p_id)}\sum_{\chi_{p_id}}\sum_{j,k=1}^{p-1}\big{|}\sum_{a=1}^{p-1}\chi_d(a)\psi(ja+k\bar{a})\big{|} \left|\sum_{r=1}^{p-1}(-1)^r\psi(-jr)\right|
\left|\sum_{s=1}^{p-1}(-1)^s \psi(-ks)\right|.
\end{equation}
The result follows from (\ref{mauve}) as the  deduction of (\ref{green}) from (\ref{blue}) and (\ref{Tp}).
\end{proof}

\begin{thm} \label{silver}  Let $p (>3)$ be an odd prime such that $p-1$ has (even) core $f$ and sieving primes $p_1,\ldots, p_s$,
Assume that $\delta>0$.   Then

\[N_p>\frac{\theta(f)}{2}\left\{(p-1)-2T_p^2W(f)\left(\frac{s-1}{\delta}+2\right)p^{\frac{1}{2}}\log ^2 p\right\}.\]
Hence there exists an LPR modulo $p$ whenever
\begin{equation} \label{gold}
p^{\frac{1}{2}}>2T_p^2W(f)\left(\frac{s-1}{\delta}+2\right)\log^2p + p^{-\frac{1}{2}}.
\end{equation}
For example, if $p \geq 1637$, then it suffices that
\begin{equation} \label{purple}
p^{\frac{1}{2}}> W(f)\left(\frac{s-1}{\delta}+2\right)\log^2p + p^{-\frac{1}{2}}.
\end{equation}
\end{thm}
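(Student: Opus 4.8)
The plan is to feed the two estimates already in hand --- Theorem~\ref{Ne} applied to the core $f$, and the argument of Lemma~\ref{brown} applied to each sieving prime $p_i$ --- into the sieve inequality~(\ref{old}) of Lemma~\ref{sieve}. Write $\theta=\theta(f)$, $W=W(f)$, and abbreviate $L=T_p^{2}\,W\,p^{1/2}\log^{2}p$. By~(\ref{old}),
\[
N_p\ \ge\ \delta\,N(f)\ +\ \sum_{i=1}^{s}\bigl(N(p_if)-\theta_{p_i}N(f)\bigr),
\]
and $\delta>0$ by hypothesis, so it is enough to bound the core term $\delta N(f)$ from below and each sieving correction $N(p_if)-\theta_{p_i}N(f)$ in absolute value from above.

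For the core term, Theorem~\ref{Ne} with $e=f$ gives $\bigl|N(f)-\tfrac{\theta}{2}(p-1)\bigr|<\theta L$, whence $\delta N(f)>\tfrac{\delta\theta}{2}(p-1)-\delta\theta L$. For each sieving correction, the argument that proves Lemma~\ref{brown} --- keeping the prefactor $\theta_{p_if}=(1-1/p_i)\theta$ that occurs in it --- gives $\bigl|N(p_if)-\theta_{p_i}N(f)\bigr|<(1-1/p_i)\theta L$. Summing this over $i$ and using the identity $\sum_{i=1}^{s}(1-1/p_i)=s-\sum_{i=1}^{s}1/p_i=s-1+\delta$, which is immediate from the definition of $\delta$, we obtain
\[
N_p\ >\ \frac{\delta\theta}{2}(p-1)-\bigl(\delta+(s-1+\delta)\bigr)\theta L\ =\ \frac{\delta\theta}{2}(p-1)-(s-1+2\delta)\,\theta L .
\]
Factoring out $\tfrac{\delta\theta}{2}$, legitimate since $\delta>0$, converts the bracket into $(p-1)-\tfrac{2}{\delta}(s-1+2\delta)L=(p-1)-2\bigl(\tfrac{s-1}{\delta}+2\bigr)L$, and unwinding $L$ gives
\[
N_p\ >\ \frac{\delta\,\theta(f)}{2}\Bigl\{(p-1)-2T_p^{2}W(f)\Bigl(\tfrac{s-1}{\delta}+2\Bigr)p^{1/2}\log^{2}p\Bigr\},
\]
the asserted lower bound. (The factor $\delta$ is needed in the leading term, since $N_p\sim\tfrac{\theta_{p-1}}{2}(p-1)$ and $\theta_{p-1}=\theta(f)\prod_{i}(1-1/p_i)$ can be as small as $\delta\,\theta(f)$.)

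The existence criteria follow at once. Since $\tfrac{\delta\theta(f)}{2}>0$, the bound just displayed forces $N_p>0$ --- so an LPR exists --- as soon as the brace is positive; dividing $(p-1)>2T_p^{2}W(f)\bigl(\tfrac{s-1}{\delta}+2\bigr)p^{1/2}\log^{2}p$ through by $p^{1/2}$ and using $(p-1)/p^{1/2}=p^{1/2}-p^{-1/2}$ gives precisely~(\ref{gold}). For $p\ge1637$, Lemma~\ref{red} provides $T_p^{2}<\tfrac12$, hence $2T_p^{2}<1$, and then~(\ref{gold}) is implied by the neater sufficient condition~(\ref{purple}).

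I do not anticipate a genuine obstacle: the argument is entirely bookkeeping on top of machinery that is already in place --- the Kloosterman / Castro--Moreno bound~(\ref{blue}), the tangent-sum estimate~(\ref{Tp}), the bounds for $T_p$ of Lemma~\ref{red}, and the combinatorial sieve of Lemma~\ref{sieve}. The single point that demands care is carrying the factor $\theta(f)$ through both error terms consistently --- it multiplies the Theorem~\ref{Ne} error and, via $\theta_{p_if}=(1-1/p_i)\theta(f)$, equally multiplies the Lemma~\ref{brown} error --- so that it cancels cleanly when $\tfrac{\delta\theta(f)}{2}$ is pulled out; together with $\sum_{i}(1-1/p_i)=s-1+\delta$, this is exactly what collapses the total error into the single quantity $2\bigl(\tfrac{s-1}{\delta}+2\bigr)T_p^{2}W(f)p^{1/2}\log^{2}p$.
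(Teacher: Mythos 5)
Your proof is correct and follows exactly the route the paper intends: the paper's own proof is a one-line pointer to combining the sieve inequality (\ref{old}) with Lemma \ref{brown} and Theorem \ref{Ne}, and your argument is precisely that bookkeeping carried out in full. Two remarks on where your conclusion differs from the printed text, in both cases to your credit. First, your final lower bound carries the prefactor $\frac{\delta\,\theta(f)}{2}$ where the theorem as printed has $\frac{\theta(f)}{2}$; your version is the correct one, since the main term of $N_p$ is $\frac{\theta_{p-1}}{2}(p-1)$ with $\theta_{p-1}=\theta(f)\prod_{i}(1-1/p_i)\le\theta(f)$, so a lower bound with leading term $\frac{\theta(f)}{2}(p-1)$ cannot hold when $s\ge1$ --- the $\delta$ has evidently been dropped in the printed statement (compare the analogous sieve bounds in \cite{COT15} and \cite{COT}), and the existence criteria (\ref{gold}) and (\ref{purple}) are unaffected because $\delta>0$. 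Second, you are right to work from the \emph{proof} of Lemma \ref{brown} and use $|N(p_if)-\theta_{p_i}N(f)|<(1-1/p_i)\,\theta(f)W(f)T_p^2p^{1/2}\log^2p$ (retaining the factor $\theta_{p_if}$ from (\ref{mauve}) and the $\tfrac12$ coming from $D=\tfrac12(E(p_if)-\theta_{p_i}E(f))$) rather than the lemma's printed bound $2(1-1/p_i)W_fT_p^2p^{1/2}\log^2p$: only the former, combined with $\sum_i(1-1/p_i)=s-1+\delta$, produces the constant $\frac{s-1}{\delta}+2$ in the theorem, so this is clearly what the authors intended.
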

\begin{proof}
Inequality (\ref{gold}) follows from (\ref{old}) using Lemma \ref{brown} and (\ref{Neeq}). For (\ref{purple}), recall Lemma~\ref{red}.
\end{proof}
Theorem \ref{silver} extends Theorem \ref{Np} and allows us to proceed to a complete existence result. We begin with the Corollary \ref{exist}. We use a result of Robin \cite[Thm 1]{Robin}, namely that $\omega(n) \leq 1.4 \log n/ (\log\log n)$ for all $n\geq 3$. Sharper versions of this inequality are known, but this is sufficient to show that (\ref{dundee}) holds, and thus  there is an LPR mod $p$, for all $\omega(p-1) \geq 13$. 

Next, we use (\ref{purple}) in Theorem \ref{silver} to eliminate $\omega(p-1) = 12$ by choosing $s=3$. We have  $\delta \geq 1 - 1/29 - 1/31- 1/37$ so that (\ref{purple}) is true for all $p> 3.2\cdot 10^{12}$. But, since $\omega(p-1) = 12$ we have $p-1\geq p_{1} \cdots p_{12} > 7\cdot 10^{12}$, whence we are done. Similarly, we choose $s=5,6$ for $\omega(p-1) = 11,10$.

When $\omega(p-1) = 9$ we choose $s=7$, which means that (\ref{purple}) is true for all $p\geq 1.3\cdot 10^{9}$. However, since we only know that $p-1 \geq p_{1} \cdots p_{9} > 2.2\cdot 10^{8}$ we still have some cases to check.  We proceed according to the `divide and conquer' scheme of \cite{MTT}.

We have that $3|p-1$ since otherwise $p-1 \geq 2\cdot 5 \cdots p_{10} > 2.1\cdot 10^{9}$. Moreover, we  have that $5$ divides $p-1$, since, if not, then our value of $\delta$ increases by $1/5 - 1/\prime{10}$, which is enough to show that (\ref{purple}) holds. A similar conclusion holds with the case $7|(p-1)$. While we cannot deduce that $11|(p-1)$ using this method, this is more than sufficient for our needs. We have that $p-1 = 2\cdot 3 \cdot 5 \cdot 7 k = 210k$ where, since $p< 1.3\cdot 10^{9}$ we have $k\leq 6.2\cdot 10^{6}$. 

We now enumerate all values of $n=210k+1$ for $1\leq k \leq 6.2\cdot 10^{6}$, and test whether these $n$ are prime and whether $\omega(n-1) =9$. We are left with a list of 81 values, which we can test\footnote{We could proceed, as in \cite{COT15} and \cite{COT}, to compute the \textit{exact} value of $\delta$ for these values. For example, the largest element in our list is 1,295,163,870: when $s=7$ this gives $\delta = 0.39\ldots$, which is an improvement on the worst-case scenario of $\delta = 0.33\ldots$. We find that all but 39 values in our list satisfy (\ref{purple}).} this directly to see whether they have an LPR: all do.

For $\omega(p-1) = 8, 7$ we choose $s=6,5$ which shows that we need only check those $p\leq 6.3\cdot 10^{8}$ and $p\leq 3.1\cdot 10^{8}$ respectively. For $\omega(p-1)\leq 6$ we use the unsieved (\ref{dundee}) to show that we need only check $p\leq 7.1\cdot 10^{8}$. While we could refine each of these searches, we shall simply verify that each of the 36,743,905 primes not exceeding $7.1\cdot 10^{8}$ have an LPR. 

We simply search for the first positive primitive root mod $p$, and test whether the sum of it and its inverse is odd. Once we have verified this for one value of $p$ we move on to the next one. It took less than an hour on a standard desktop (3.4 GHz Intel\textsuperscript{\textregistered} Core\texttrademark i7-6700).

\section{The Golomb pairs problem}\label{Macauley}

The following application of the theorem of Castro and Moreno (see \cite{CP}), is an instant improvement of Lemma 2.3 of \cite{WW}.
\begin{lem}\label{orange}
Let $p>3$ be prime and  $\psi$ be the additive character on the integers modulo $p$.    Further let $\chi^{(1)}, \chi^{(2)}$ be multiplicative characters modulo $p$.
Then for integers $j,k$ with $1 \leq j,k\leq p-1$,
\[ \left|\sum_{a=1}^{p-1} \chi^{(1)}(a)\chi^{(2)}(1-a) \psi(ja+k\bar{a})\right|\leq 3p^{\frac{1}{2}}. \]

\end{lem}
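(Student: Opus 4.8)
The plan is to reduce the bound to a direct application of the Castro--Moreno estimate for exponential sums with mixed multiplicative and additive characters over $\mathbb{F}_p$. First I would rewrite the summand in a way that exhibits the object as a twisted character sum attached to a rational function. Writing $\bar a = a^{-1}$ in $\mathbb{F}_p$, the additive part $\psi(ja+k\bar a)$ is $\psi$ evaluated at the rational function $f(a) = (ja^2+k)/a$, which has degree profile giving a pole of order $1$ at $0$ and a pole of order $1$ at $\infty$; the multiplicative part is $\chi^{(1)}(a)\chi^{(2)}(1-a)$, a product of two (possibly trivial) multiplicative characters composed with the rational functions $a$ and $1-a$. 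Thus the whole sum over $a \in \{1,\dots,p-1\} = \mathbb{F}_p^\times$ (note $a=0$ and, when $\chi^{(2)}$ is nontrivial, $a=1$ contribute nothing, and in any case the character formalism handles the missing points) is exactly of the shape covered by the Castro--Moreno bound as quoted via \cite{CP}.

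The key step is then to compute the combinatorial quantity that governs the Castro--Moreno estimate and check it yields $3p^{1/2}$. In the formulation used earlier in the paper (cf.\ the discussion around (\ref{blue}), where a rational function with two simple poles and one multiplicative character gave the bound $2p^{1/2}$), the relevant count is essentially (number of distinct poles of $f$, counted appropriately, together with the ramification/branch points contributed by the multiplicative characters) minus $1$, all times $p^{1/2}$. Here the additive function $f(a)=(ja^2+k)/a$ contributes its two simple poles at $0$ and $\infty$; the factor $\chi^{(1)}(a)$ contributes the point $a=0$ (already counted), while $\chi^{(2)}(1-a)$ contributes the additional branch point $a=1$. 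So the effective count is $3$, giving the bound $3p^{1/2}$, uniformly in $j,k$ since $j,k\neq 0$ guarantees $f$ genuinely has the claimed pole behaviour and in particular is nonconstant. I would state this by invoking exactly the inequality (1.4) of \cite{CP} that was already used to prove (\ref{blue}), now with the extra multiplicative character inserted.

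The main obstacle — really the only subtlety — is bookkeeping: making sure the Castro--Moreno hypotheses are genuinely met (the rational function must not be of the form $g^p - g$ plus a constant, i.e.\ it must not be "trivial" as an Artin--Schreier argument, which is immediate here because $f$ has poles; and the multiplicative characters must be such that the combined sum is not degenerate, which again holds since a pole of $\psi\circ f$ at $0$ dominates), and that the constant is correctly tallied as $3$ rather than $2$ or $4$. I would handle the endpoints $a=0,1$ explicitly: the term $a=0$ is absent from the range and would be killed by $\psi(k\bar a)$ having a pole anyway, and if $\chi^{(2)}$ is nontrivial the term $a=1$ contributes $0$, while if $\chi^{(2)}$ is trivial it contributes a single term of modulus $1$, harmlessly absorbed. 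With these checks in place the lemma follows immediately, and it is strictly better than Lemma~2.3 of \cite{WW}, which is the point of stating it. This lemma will then feed into the proof of (\ref{indigo}) in the same way that (\ref{blue}) fed into (\ref{green}).
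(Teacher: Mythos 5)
Your proposal matches the paper's treatment: Lemma \ref{orange} is presented there precisely as an immediate application of the Castro--Moreno bound as stated in (1.4) of \cite{CP}, applied to the rational function $(ja^2+k)/a$ twisted by the two multiplicative characters $\chi^{(1)}(a)\chi^{(2)}(1-a)$, with the extra point $a=1$ raising the constant from the $2p^{\frac{1}{2}}$ of (\ref{blue}) to $3p^{\frac{1}{2}}$, uniformly in $j,k$ with $1\leq j,k\leq p-1$. Your bookkeeping of the degenerate cases ($a=0,1$, triviality of the characters, $f$ not of Artin--Schreier type) is sound and, if anything, more detailed than the paper, which simply cites the theorem as an instant improvement of Lemma~2.3 of \cite{WW}.
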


From now on abbreviate $\theta_{p-1}$ to $\theta$ and $W_{p-1}$ to $W$. We allow the consideration of arbitrary integers modulo $p$ but continue to restrict $\bar{a}$ for $a$ indivisible by $p$
 to mean its inverse in the range $1 \leq \bar{a} \leq  p-1$. In particular, if $a \equiv a' \pmod p$, then $\chi(a)= \chi(a')$ and $\bar{a}=\bar{a'}$.

 Drawing on \cite[\S 3]{WW}  we have

\[G_p=\frac{1}{4}\theta^2 \sum_{d_1,d_2|p-1} \frac{\mu(d_1)\mu(d_2)}{\phi(d_1)\phi(d_2)} \sum_{\chi_{d_1},\chi_{d_2}}\sum_{a=1}^{p-1}\chi_{d_1}(a)\chi_{d_2}(1-a)(1-(-1)^{a + \bar{a}}) (1-(-1)^{p+1-a+\overline{p+1-a}}).\]
Here, the sum over $a$ can omit $a=1$ because of the factor  $\chi_{d_2}(1-a)$.
Thus, $G_p=A_1-A_2-A_3+A_4$, where, for $i=1,\ldots,4$,
\begin{equation}\label{theAs}
A_i=\frac{1}{4}\theta^2 \sum_{d_1,d_2|p-1} \frac{\mu(d_1)\mu(d_2)}{\phi(d_1)\phi(d_2)}\sum_{a=2}^{p-1} \sum_{\chi_{d_1},\chi_{d_2}}\chi_{d_1}(a)\chi_{d_2}(1-a)\alpha_i,
\end{equation}
with $\alpha_1=1, \alpha_2=(-1)^{a+\bar{a}}, \alpha_3=(-1)^{a+\overline{p+1-a}}, \alpha_4=(-1)^{\bar{a}+ \overline{p+1-a}}$.  In fact, as noted in the proof in  \cite[\S 3]{WW},
$\overline{p+1-a}=p-\overline{a-1}$, so that $\alpha_4=-(-1)^{\bar{a}-\overline{a-1}}=-(-1)^{\bar{a}+\overline{a-1}}.$
Now $4A_1$ is just  the total number of pairs   $(a,b)$ of primitive roots  (not necessarily Lehmer numbers for which  $a+b \equiv 1 \pmod p$).  Hence (see, for example \cite[Lem.\ 2]{COT15}),
\begin{equation}\label{taupe}
 \left|A_1-\frac{\theta_{p-1}^2 (p-2)}{4}\right|\leq \frac{\theta_{p-1}^2}{4}(W^2-1)p^{\frac{1}{2}}.
 \end{equation}

Next, as at (\ref{black}),
\[| A_2| =\frac{\theta^2}{4p^2}\sum_{d_1,d_2|p-1}\frac{|\mu(d_1)\mu(d_2)|}{\phi_{d_1}\phi_{d_2}}\sum_{\chi_{d_1}, \chi_{d_2}} \sum_{j,k=1}^{p-1}\big{|}\sum_{a=2}^{p-1}\chi_{d_1}(a)\chi_{d_2}(1-a)\psi(ja+k\bar{a})\big{|} |U_j||U_k|, \]
where $U_j=\left|\sum_{r=1}^{p-1}(-1)^r\psi(-jr)\right|$. It makes no difference if, here, the sum over $a$ starts at $1$.
 Hence, using Lemma \ref{orange} (with the $+$ sign) instead of (\ref{blue}), the following bound holds when $i=2$, namely
\begin{equation}\label{lilac}
|A_i| \leq \frac{3\theta^2}{4} W_{p-1}^2 p^{\frac{1}{2}}\log^2 p.
\end{equation}

We demonstrate that (\ref{lilac}) also holds when $i=3,4$.   First, consider $A_3$.  Observe $\alpha_3=-(-1)^{1-a+\overline{1-a}}$.
 Replace $a$ (which runs between $2$ and $p-1$) by $p+1-a$ (which also runs between $2$ and $p-1$ and  $\alpha_3 =-(-1)^{p+a+\overline{p+a}}=(-1)^{a+\bar{a}}$.
   Moreover, in (\ref{theAs}), with $j=3$, $\chi_{d_1}(a)\chi_{d_2}(1-a)$ is transformed into $\chi_{d_2}(a)\chi_{d_1}(1-a)$.   Then, as for $A_2$,
  \[| A_3| =\frac{\theta^2}{4p^2}\sum_{d_1,d_2|p-1}\frac{|\mu(d_1)\mu(d_2)|}{\phi_{d_1}\phi_{d_2}}\sum_{\chi_{d_1}, \chi_{d_2}}
  \sum_{j,k=1}^{p-1}\big{|}\sum_{a=2}^{p-1}\chi_{d_2}(a)\chi_{d_1}(1-a)\psi(ja+k\bar{a})\big{|} |U_j||U_k|, \]
  and (\ref{lilac}) also holds when $i=3$.

  Finally, consider $A_4$.  First, set $b=a+1$ so that $b$ runs between $1$ and $p-2$ and $\alpha_4=(-1)^{b+1+\bar{b}}$.  Then set $c=\bar{b}$ (whence
  $b=\bar{c}$) so that $c$ also runs from $1$ to $p-2$ (because evidently $\overline{p-1} =p-1)$. Moreover,
  \[\alpha_4=-(-1)^{c+\overline{\bar{c}+1}}=-(-1)^{c+p+1-\overline{c+1}}=(-1)^{c+1+\overline{c+1}}.\]
  Finally, set $c=a-1$ so that this last variable $a$ again runs between $2$ and $p-1$  and $\alpha_4=(-1)^{a+\bar{a}}$. We have effectively replaced the original variable $a$ by $\frac{1}{a-1}+1= \frac{a}{a-1}$.  Hence, in the expression (\ref{theAs}) for $A_4$ we have replaced
  $\chi_{d_1}(a)\chi_{d_2}(1-a)$ by $\chi_{d_1}(a/(a-1))\chi_{d_2}(-1/(a-1))=\chi_{d_1}(-1)\chi_{d_1}(a)(\chi_{d_1}\chi_{d_2})^{-1}(1-a)$.  This yields
  \[| A_4| =\frac{\theta^2}{4p^2}\sum_{d_1,d_2|p-1}\frac{|\mu(d_1)\mu(d_2)|}{\phi_{d_1}\phi_{d_2}}\sum_{\chi_{d_1}, \chi_{d_2}}
  \sum_{j,k=1}^{p-1}\big{|}\sum_{a=2}^{p-1}\chi_{d_1}(a)(\chi_{d_1}\chi_{d_2})^{-1}(1-a)\psi(ja+k\bar{a})\big{|} |U_j||U_k|. \]
  We conclude that (\ref{lilac}) holds also when $i=4$.

  By combining (\ref{taupe}) and (\ref{lilac}) with Lemma \ref{red} we obtain a final theorem that justifies (\ref{indigo}). The inequality (\ref{kipling}) follows from (\ref{rushdie}) after a simple calculation.

  \begin{thm}\label{cream}
  Let $p>3$ be a prime.  Then
\begin{equation}\label{rushdie}
\left|G_p- \frac{\theta_{p-1}^2}{4}(p-2)\right|<\frac{\theta_{p-1}^2}{4}T_p^2[W_{p-1}^2(9\log^2 p+1) -1]p^{\frac{1}{2}}.
\end{equation}
In particular, if $p >3$, then
\begin{equation}\label{kipling}
\left|G_p- \frac{\theta_{p-1}^2}{4}(p-2)\right| <\frac{\theta_{p-1}^2}{8}[W_{p-1}^2(9\log^2 p+1) -1]p^{\frac{1}{2}}.
\end{equation}
  \end{thm}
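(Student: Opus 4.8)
The plan is to assemble Theorem~\ref{cream} from the three ingredients already placed in \S\ref{Macauley}: the decomposition $G_p = A_1 - A_2 - A_3 + A_4$, the near-diagonal estimate~(\ref{taupe}) for $A_1$ coming from the classical Golomb-type count, and the uniform bound~(\ref{lilac}) established for each of $A_2, A_3, A_4$. First I would write
\[
\left|G_p - \frac{\theta_{p-1}^2}{4}(p-2)\right| \le \left|A_1 - \frac{\theta_{p-1}^2}{4}(p-2)\right| + |A_2| + |A_3| + |A_4|
\]
by the triangle inequality, and then substitute the bounds: $\frac{\theta_{p-1}^2}{4}(W^2-1)p^{1/2}$ for the first term and $\frac{3\theta^2}{4}W^2 p^{1/2}\log^2 p$ for each of the remaining three. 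This gives
\[
\left|G_p - \frac{\theta_{p-1}^2}{4}(p-2)\right| \le \frac{\theta_{p-1}^2}{4}\left[(W^2 - 1) + 9 W^2\log^2 p\right]p^{1/2} = \frac{\theta_{p-1}^2}{4}\left[W^2(9\log^2 p + 1) - 1\right]p^{1/2},
\]
which is already~(\ref{rushdie}) \emph{except} for the missing factor $T_p^2$ on the right-hand side of~(\ref{rushdie}).

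So the real content of~(\ref{rushdie}) is that the $\log^2 p$ appearing in~(\ref{lilac}) can be sharpened to $T_p^2 \log^2 p$, since $T_p^2$ is bounded (Lemma~\ref{red} gives $T_p < \tfrac{2}{\pi}(1 + 1.549/\log p)$, so $T_p^2$ is only slightly larger than $4/\pi^2 < 1$ for large $p$, and the claimed coefficient improvement is genuine). To get this I would revisit the displayed bound for $|A_i|$ in the text, where the sum over $j$ (resp.\ $k$) of $|U_j| = \bigl|\sum_{r=1}^{p-1}(-1)^r\psi(-jr)\bigr|$ is involved. Exactly as in the proof of Theorem~\ref{straw} and in~(\ref{Tp}), one has $\sum_{j=1}^{p-1}|U_j| = T_p\, p\log p$; so in~$|A_i|$ the double sum over $j,k$ of $|U_j||U_k|$ contributes a factor $(T_p\, p\log p)^2 = T_p^2 p^2\log^2 p$ rather than the cruder $(p\log p)^2$. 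Combined with Lemma~\ref{orange}'s bound $3p^{1/2}$ on the inner $a$-sum, with $\sum_{d|p-1}|\mu(d)|/\phi(d)$ contributing (over the two independent divisor variables) a factor bounded using $\sum_{d|p-1}|\mu(d)| = W$ and $\phi(d)$ characters of order $d$, and with the prefactor $\theta^2/(4p^2)$, this yields the refined $|A_i| \le \tfrac{3\theta^2}{4}T_p^2 W^2 p^{1/2}\log^2 p$. Re-summing the four pieces then gives~(\ref{rushdie}) exactly; I should double-check that the $A_1$ term does not also carry a $T_p^2$ — it does not, since~(\ref{taupe}) is a pure character-sum estimate with no additive twist — which is why $T_p^2$ can only be factored out after writing $W^2 - 1 \le T_p^2(W^2-1)$ in the regime where $T_p^2 \ge 1$; for large $p$ this is automatic from Lemma~\ref{red}, and for the finitely many small primes where $T_p^2$ might dip just below $1$ one checks~(\ref{rushdie}) directly, or simply observes that the stated inequality~(\ref{rushdie}) is what one wants and the constant $T_p^2 \ge 4/\pi^2$ keeps $T_p^2(W^2-1) \ge$ the needed quantity once a modest lower bound on $p$ is imposed.

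Finally, for~(\ref{kipling}) I would invoke Lemma~\ref{red} in the form $T_p^2 < 1/2$, valid for all $p \ge 1637$, to replace $\tfrac{\theta_{p-1}^2}{4}T_p^2$ by $\tfrac{\theta_{p-1}^2}{8}$ on the right of~(\ref{rushdie}); for the finitely many primes $3 < p < 1637$ one verifies~(\ref{kipling}) by direct computation of $T_p$ (or of $G_p$ itself), exactly as was done for the analogous small-prime cases in the proofs of Theorems~\ref{straw}, \ref{Np} and~\ref{Ne}. The main obstacle is purely bookkeeping: tracking the divisor sums $\sum_{d_1,d_2 \mid p-1}\tfrac{|\mu(d_1)\mu(d_2)|}{\phi(d_1)\phi(d_2)}\sum_{\chi_{d_1},\chi_{d_2}} 1 = \Bigl(\sum_{d\mid p-1}|\mu(d)|\Bigr)^2 = W^2$ correctly, making sure the factor-of-$T_p^2$ improvement is applied to $A_2, A_3, A_4$ but not spuriously to $A_1$, and confirming the small-prime range; none of the individual character-sum inputs require any new idea beyond Lemmas~\ref{red} and~\ref{orange} and the estimate~(\ref{taupe}).
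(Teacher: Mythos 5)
Your route is the paper's own: the decomposition $G_p=A_1-A_2-A_3+A_4$, the estimate (\ref{taupe}) for $A_1$, the bound for $A_2,A_3,A_4$ via Lemma \ref{orange} together with $\sum_{j=1}^{p-1}|U_j|=T_p\,p\log p$ (which, exactly as in the passage from (\ref{blue}) and (\ref{Tp}) to (\ref{green}), produces the factor $T_p^2$ --- you are right that this is the real source of the $T_p^2$ in (\ref{rushdie}), and that the displayed (\ref{lilac}) omits it), and finally the passage to (\ref{kipling}) via $T_p^2<\tfrac12$ for $p\ge1637$ with the smaller primes checked directly. Up to that point everything matches the paper.

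The genuine gap is in how you attach $T_p^2$ to the $A_1$ contribution. You need $W_{p-1}^2-1\le T_p^2\,(W_{p-1}^2-1)$, i.e.\ $T_p^2\ge1$, and you assert this is ``automatic from Lemma \ref{red} for large $p$,'' with only finitely many small exceptions left to verify. Lemma \ref{red} says precisely the opposite: $T_p<\tfrac{2}{\pi}\bigl(1+\tfrac{1.549}{\log p}\bigr)$, so $T_p<1$ for every $p\ge17$ (and for $5\le p\le13$ by direct computation), indeed $T_p^2<\tfrac12$ once $p\ge1637$ --- which is exactly what you use in your own last step, and what you observe earlier when you note $T_p^2$ is close to $4/\pi^2$. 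So $T_p^2\ge1$ fails for \emph{all} primes $p\ge5$: the exceptional set is not finite, the fallback ``check the small primes directly'' cannot close it, and ``$T_p^2\ge4/\pi^2$'' gives no inequality in the required direction. What your computation actually proves is
\[
\left|G_p-\frac{\theta_{p-1}^2}{4}(p-2)\right|\le\frac{\theta_{p-1}^2}{4}\left[(W_{p-1}^2-1)+9\,T_p^2\,W_{p-1}^2\log^2p\right]p^{\frac12},
\]
that is, (\ref{rushdie}) with $T_p^2$ attached only to the part coming from $A_2,A_3,A_4$; obtaining the literal (\ref{rushdie}) would require a separate argument for the untwisted $W_{p-1}^2-1$ term coming from (\ref{taupe}) (a point the paper's one-line derivation also glosses over). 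The bound you do establish is still strong enough to yield (\ref{kipling}) and hence (\ref{indigo}) for all large $p$, using $T_p^2<\tfrac12$ and the slack in the $\log^2 p$ term, with a finite remaining range; but as written, your justification of (\ref{rushdie}) itself rests on a false statement about $T_p$.
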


\end{document}